\theoremstyle{definition}
\newtheorem{dfn}{Definition}
\newtheorem{lem}{Lemma}
\newtheorem{thm}{Theorem}
\newtheorem{rem}[thm]{Remark}
\newcommand{\N}{\mathbb{N}}
\newcommand{\R}{\mathbb{R}}
\definecolor{myblue}{RGB}{0, 128, 128}
\let\mytagform@=\tagform@
\def\tagform@#1{\maketag@@@{\color{myblue}(#1)}}
\title{On Unique Ergodicity Of Coupled AIMD Flows}
\author{Pietro Ferraro, Jia Yuan Yu, Ramen Ghosh, Syed Eqbal Alam, \\ Jakub Marecek, Fabian Wirth, and Robert Shorten}
\begin{document}
\maketitle
\begin{abstract}
The AIMD algorithm, which underpins the Transmission Control Protocol (TCP) for transporting data packets in communication networks, is perhaps the most successful control algorithm ever deployed. Recently, its use has been extended beyond communication networks, and successful applications of the AIMD algorithm have been reported in transportation, energy, and mathematical biology. A very recent development in the use of AIMD is its application in solving large-scale optimization and distributed control problems without the need for inter-agent communication. In this context, an interesting problem arises when multiple AIMD networks that are coupled in some sense (usually through a nonlinearity). The purpose of this note is to prove that such systems in certain settings inherit the ergodic properties of individual AIMD networks. This result has important consequences for the convergence of the aforementioned optimization algorithms. \textcolor{black}{The arguments in the paper also correct conceptual and technical errors in \cite{Syed2020}}.         
\end{abstract}

\section{Introduction}
The issue of ergodicity in stochastic systems under the influence of feedback has recently emerged as a topic of interest in the control engineering community. While traditional interest in this topic has been driven by the mathematical community, new and recent interest is driven by very practical considerations that arise as society embraces new disruptive business paradigms such as the sharing economy and the circular economy \cite{Fioravanti2019}. In particular, in the design of sharing economy systems, one is interested in allocating (sharing) resources in some manner. To do this, we are interested in using randomized or stochastic algorithms (to, for example, avoid issues such as flapping in routing or other load balancing systems, refer \cite{Crisostomi2020} for details). The issue of ergodicity arises naturally in such a context. Roughly speaking, ergodicity plays the role of {\em independence of initial conditions} in the study of deterministic ordinary differential equations. Ergodic behaviour, namely this independence of initial conditions, is thus fundamental, both from the perspective of the design of systems where, for example, reproducible (in a statistical sense) simulations are essential, and also from the point of view of issuing economic contracts where reproducible levels of service are also necessary.\newline 

As we have mentioned, while the study of ergodic behaviour is already a mature topic in mathematics, the study of ergodicity as it arises in the context of the sharing economy is bringing new perspectives to this old problem. For example, the preservation of ergodicity in the design of feedback systems appears to be a new problem in control theory \cite{Fioravanti2019}, as is the study of  ergodic behaviour in several feedback systems that are themselves coupled together. Such systems arise, for example, when a group of agents share more than one resource, and the utility of each allocation varies in a nonlinear manner amongst agents. This problem, which considers the effect of nonlinear coupling in systems that otherwise exhibit ergodic behaviour, appears to be unexplored in the stochastic systems community, and it is precisely this problem that we shall start to address in this paper. More precisely, we are interested in exploring the use of the Additive Increase Multiplicative Decrease (AIMD) algorithm for allocating multiple resources amongst agents in a distributed manner. The AIMD algorithm is one of the most widely deployed and successful feedback control algorithms currently utilised in society. From its beginnings as a purely distributed congestion management algorithm in computer networking, it has proved its utility across multiple domains, including charging electric vehicles, smart grid applications, and other smart city applications \cite{Marecek2015, Marecek2016}. Recently, the AIMD algorithm has also been shown to exist in the natural and biological world [2]. An essential question in the study of AIMD networks concerns their \emph{ergodicity}. The ergodic behaviour of elementary AIMD networks was first reported in [4] using tools from the study of iterated functions systems \cite{Barnsley1988}. This short paper aims to show that networks of coupled AIMD systems also exhibit the ergodic property under very mild assumptions. To the best of our knowledge, this is the first such result to appear in the literature.\newline

\textcolor{black}{{\bf Comment} (connection to \cite{Syed2020}): An important contribution of this note is to correct a modelling error in \cite{Syed2020}. The paper \cite{Syed2020} extends the finite averaging concepts developed in \cite{Wirth2019} to the case of coupled resources. However, the contraction analysis presented therein is erroneous. The present note corrects this error by enforcing instants in time where all flows are synchronized, thereby facilitating analysis using contraction arguments.}  

\section{Preliminary results}
\subsection{Notations, definitions, and terminology}

The vector space of real column vectors with $n$ entries is denoted by $\mathbb R^n$ with elements $x =
\begin{bmatrix}
x_{1} & \ldots & x_{n}
\end{bmatrix}^\top$, where $x^\top$ denotes the transpose of $x$. 
We denote $e := [1 \quad 1 \quad \ldots \quad 1]^\top \in \mathbb R^n$ and the $i$'th standard basis vector by $e_i$. The identity matrix is denoted by $I$. The norm we use on $\mathbb R^n$ is the $1$-norm, defined by $\left\|x\right\|_1 = \sum_{i=1}^n |x_i|$.
The standard simplex $\Sigma$ in $\mathbb{R}^{n}$ is defined by $\Sigma= \left \{ x = (x_{1},x_{2},\ldots,x_{n}) \in \mathbb{R}^{n}_+ : \sum_{i=1}^{n} x_{i} =1 \right \}$. We will write $\Sigma_n$ if we want to make the dimension of the simplex explicit.

\subsection{A primer on the AIMD algorithm and AIMD matrices}
The AIMD algorithm is a {\em distributed} feedback control algorithm that is used to allocate a resource amongst a network of agents competing for that resource. The best known example of its application is in internet congestion control whereby multiple agents compete for a limited bandwidth \cite{Jacobson1988}, \cite{Chiu1989}. In this context, the AIMD algorithm is the basis for the transmission control protocol (TCP) that dictates the movement of packets in almost all networking applications.  
In such situations, agents in the network are assumed to operate the AIMD algorithm which, in turn, is built from two distinct components (phases), {\em the additive increase} (AI) phase and the {\em multiplicative decrease} (MD) phase. As the AIMD algorithm is described in great detail in many textbooks and papers \cite{Wirth2019}, \cite{Corless2016}, we shall not repeat this detailed discussion here. Rather, we give a very brief flavour of the algorithm and focus on reminding the reader of the mathematical modelling of a network of AIMD agents, as well as the relevant results that shall be of use in the paper. We begin with a high-level description of the AIMD phases, and the mechanism by which an agent switches between these phases. \newline  

{\bf The AI phase:} During the AI phase, agents gradually increase the share of the resource that they have acquired. Usually, but not always, agents increase their share in a manner that is proportional to the time that has elapsed since the last multiplicative phase. The positive constant of proportionality is denoted by $\alpha_i$ for agent $i$. \newline 

{\bf The MD phase: } At some instant in time, the cumulative allocation of the resource allocated to the agents will equal or exceed the available amount of resource. In the AIMD nomenclature, this event is called a {\em capacity event}. At this instant in time, a subset (or perhaps all) of the agents are informed of the capacity event, and all such informed agents reduce their allocation of resource in a multiplicative fashion. For example, in internet applications, it is common for agents to reduce their allocation by $50\%$. The multiplicative decrease factor by which agents reduce their allocation is denoted $\beta_i$ where $\beta_i \in (0,1)$. In what follows, it is very convenient to encode agents that may or may not be informed of the $k$'th capacity event by defining an event-driven  MD factor $\beta_i(k) \in \{\beta_i,1\}$; namely when an agent responds to a capacity event we set $\beta_i(k) = \beta_i$; if it does not respond to congestion we set $\beta_i(k) = 1$. \newline

{\bf AIMD drop policies: } We have not discussed the manner in which agents are informed of, and respond to, a capacity event; namely the mechanism by which an agent switches between the MD and the AI phases of the AIMD algorithm. In TCP, for example, agents deduce a capacity event when their packets are lost. Agents that lose packets automatically enter the MD phase. In other applications, all agents may be informed of capacity events and respond stochastically to these notifications according to some probability function (which we refer to as a {\em drop policy}). The ability to design these drop policies is of great utility in using the AIMD to solve certain optimization problems \cite{Wirth2019}.  \newline  

Together, this gives rise to a model of agent behaviour of the form:
\begin{eqnarray}
x_i(k+1) = \beta_i(k) x_i(k) + \alpha_i T(k),
\label{eq: basic aimd}
\end{eqnarray}
where $T(k)$ is the time between the $k$ and $(k+1)^{\text{th}}$ capacity event (notice that $T(k)$ is a function of $\alpha, \beta$ and the aggregate response of all agents to the capacity event), where $x_i(k)$ is the allocation of resource to agent $i$, and where the decision whether $\beta_i(k) = 1$ or $\beta_i(k) = \beta_i$ is taken according to a drop policy. As we have mentioned, the mathematical description of a network of agents operating the AIMD algorithm with a single constraint is thoroughly discussed in \cite[Section 1.2, 1.3] {Corless2016}. It is shown there that the dynamics of the set of $n$ users between the $k^{\text{th}}$ and $(k+1)^{\text{th}}$ capacity events can be described by the \emph{switched linear system}
\begin{align}\label{eq:aimd-dyna}
x(k+1)=A(k)x(k),
\end{align}
where $k$ enumerates the \emph{capacity events}, for $k=0,1,2,\dots,$ $x(k)$ is a 
vector with values in the simplex $\Sigma_n$, $A(k)$ is a non-negative column stochastic matrix belonging to a finite set $\mathcal{A}$ of  matrices that are indexed by a finite set $\mathcal S$ of indices, i.e.,
\begin{align}
\mathcal A=\{ A_{j} :j\in \mathcal S\}.
\end{align}

\begin{rem} Each of the $A_j$ matrices describes a different combination of agents that can respond to capacity events. 
\end{rem}

\begin{rem}
In (2), the selection of $A(k) \in \mathcal{A}$ is often probabilistic and governed by a drop policy. To be more precise, the matrix invoked at the capacity event $k$ is determined in many applications in a stochastic manner. When these probabilities depend on the state $x(k)$ we say that the system is {\em place-dependent}. 
\end{rem}

The matrices in the set $\mathcal{A}$ are constructed as follows. Let  $\alpha_i>0$, $0\leq\beta_i < 1$, $i\in \{1,2,\dots, n\}$ and $\beta_i(k) \in \{\beta_i,1\}$ for all $k\in\N$. Then
\begin{align}\label{eq:growth-decrs}
\alpha=\begin{bmatrix}\alpha_1&\cdots&\cdots&\alpha_n\end{bmatrix}^{\top} \text{ and } \beta(k)=\begin{bmatrix}\beta_1(k)&\cdots&\cdots&\beta_n(k)\end{bmatrix}^{\top}   
\end{align}
 define the growth and decrease vector, respectively.    
Then, for all $k$:
\begin{align}\label{eq:aimd-matrix-def}
A(k)&:= \text{diag}\left(\beta(k)\right)+ \left(e^{\top}\alpha\right)^{-1}\alpha\left(e^{\top}-\beta(k)^{\top}\right)\\ &=\begin{bmatrix}\beta_1(k)&\cdots&\cdots&0\\0&\beta_2(k)&\cdots&0\\\vdots&\vdots&\ddots&\vdots\\0&\cdots&\cdots&\beta_n(k)\end{bmatrix} \nonumber  +
\frac{1}{\sum\limits_{i=1}^{n}\alpha_i} \begin{bmatrix}\alpha_1\\ \vdots\\\vdots\\\alpha_n\end{bmatrix}\begin{bmatrix}1-\beta_1(k)&\cdots&\cdots&1-\beta_n(k)\end{bmatrix}.
\end{align}

We say that a matrix of the form \eqref{eq:aimd-matrix-def} is an {\em AIMD matrix}.
The interested reader can refer to \cite{Corless2016}, for a detailed explanation of the connection between (\ref{eq: basic aimd}), (\ref{eq:aimd-dyna}) and (\ref{eq:aimd-matrix-def}).



\begin{rem}
AIMD matrices are column-stochastic. The vector $e^\top = [1,\ldots,1]$ is a left-eigenvector corresponding to the Perron eigenvalue $1$. If $\lambda\ne 1$ is any other eigenvalue then $\left|\lambda\right|<1$ see \cite[Lemma 2.1]{Corless2016}. For a detailed discussion of the properties of AIMD matrices, see \cite{Corless2016}.
\end{rem}
\begin{dfn}(Drop matrix \cite{Corless2016})
An AIMD matrix $A$ is called a \emph{full-decrease} drop matrix if $\beta_i(k) = \beta_i <1$, for all $i\in\{1,2,\dots,n\}$.
\end{dfn}
\begin{dfn}(Contraction on invariant subspace \cite{Corless2016})
Let $\mathcal{E}$ be an invariant subspace of  $M \in \R^{n\times n}$. Let $\left\|\cdot\right\|$ be a norm on $\mathbb R^n$. The matrix $M$ is called a contraction on $\mathcal{E}$ (with respect to $\left\|\cdot\right\|$), if 
\begin{align*}
\left\|Mv\right\|\le \left\|v\right\| \quad \text{ for all } v\in \mathcal{E}.
\end{align*}
\end{dfn}
When measured with a suitable norm, if the difference between the state vectors becomes smaller due to the AIMD matrix action, we say that the matrix has a contractive effect. For more on contraction and related results on AIMD matrices, the reader is encouraged to see \cite[Section 3.1, Lemma 3.5, Lemma 3.7, Lemma 3.8]{Corless2016}.

\subsection{AIMD algorithm for a single resource}

The dynamical system describing a networks of $n$ agents, each operating the AIMD algorithm, is given by the stochastic difference equation \eqref{eq:aimd-dyna} \cite[cf.]{Corless2016}. We now consider a state-dependent AIMD model \cite[Chapter 7]{Corless2016}, which we shall extend to the case of multiple resources. To describe this dependency more precisely, let $\{p_j: j\in \mathcal S\}$ be a set of Lipschitz continuous probability functions from the simplex $\Sigma_n$ into the closed interval $[0,1]$ that satisfy
\begin{align}
\label{eq:probfuncs}
\sum\limits_{j\in \mathcal S} p_{j}\left(z\right)=1 \quad \text{ for all } z\in \Sigma_n.
\end{align}
Here $p_{j}(z)$ is the probability that 
the matrix $A_{j}$ occurs when the state or the share vector is $z\in\Sigma_n$. The stochastic AIMD model is thus given by
\begin{subequations}
\label{eq:AIMDmodel}
\begin{align}
\label{eq:Ak-stoch}
x(k+1) &= A(k) x(k), &x(0) \in \Sigma_n, \\
\mathbb{P}\left(A(k)=A_j\mid x(k)=z\right)&=p_{j}(z), 
&k=0,1,2,\dots,   
\label{eq:ocuurence-mat}
\end{align}
\end{subequations}

with the understanding that the random variables $A(k)$ conditioned on $x(k)$, $k\in \N$, are mutually independent.
Notice that
\eqref{eq:AIMDmodel} defines a stochastic AIMD algorithm and constitutes a specific instance of an \emph{iterated function systems} (IFS, cf. \cite{Barnsley1988}). This algorithm is referred to as the \emph{state-dependent AIMD} model. Furthermore, a Markov process is defined on the simplex $\Sigma_n$ by this model whose state-transitions probabilities are given by, see \cite[Chapter 9]{Corless2016},
\begin{align}\label{eq:transition-prob-aimd}
\mathbf{P}(z, \mathcal G)= \mathbb{P}\left(x(k+1)\in \mathcal G\mid x(k)=z\right)=\sum\limits_{j: A_{j}z\in \mathcal G} p_{j}(z) \quad \text{ for all } z\in \Sigma_n, \text{ and for any event } \mathcal G.   
\end{align}
Clearly, the right side of the Equation \eqref{eq:transition-prob-aimd} does not depend on the time $k$, hence the transition probabilities are homogeneous in time.

\subsection{Invariant measures and ergodicity}
\label{sec:preinv}

Conditions for the existence of a unique, invariant, and attractive probability distribution for Markov chains with place-dependent probabilities have been studied extensively over the past decades.

\begin{dfn}(Ergodicity)
The AIMD model has the ergodic property if it has a unique invariant distribution $\mu$ and if for every initial condition $x(0)=x_0$ with $x_0\in \Sigma$ and for every continuous function $\phi: \Sigma\to \mathbb R$,
\begin{align}\label{eq:ergo}
 \lim_{k\to \infty} \frac{1}{k+1} \sum\limits_{j=0}^{k} \phi(x(j))=\int \phi(z)\mu(\mathrm{d}z)   \quad \text{ almost surely } \mathbb{P}_{x_0}.
\end{align}
\end{dfn}
The left side of \eqref{eq:ergo} is the time average of the function $\phi$ applied to the vector $x(k)$ starting at $x_0$, whereas the right side is the ensemble average of $\phi$ with respect to the invariant distribution. Thus, the ergodic property equals the sequence average and ensemble average, allowing us to expectation with a sequence average. The right side does not depend on the starting state $x(0)=x_0$, hence it also ensures that the long-term average is independent of the initial state. A numerical simulation of the average will ultimately converge to the true average.

To show that our model described above is uniquely ergodic, we will use the following result which is a special case of a result of \cite{Barnsley1988}: 
\begin{thm}\cite[Theorem 2.1]{Barnsley1988}\label{thm:barns}
Consider a state-dependent AIMD model described by \eqref{eq:AIMDmodel} where $\mathcal S$ is finite and each probability function $p_{j}, j\in\mathcal  S$ is Lipschitz. Suppose that there exist $r<1$ and $\delta>0$ such that the following two conditions hold for all $x,y\in \Sigma_n$:
\begin{itemize}
\item[(a)] \begin{align}
\sum\limits_{j\in \mathcal S} p_{j}(x) \left\|A_{j}(x-y)\right\|\le r\left\|x-y\right\|
\end{align} 
\item[(b)] 
\begin{align}
\sum\limits_{j\in \mathcal C(x,y)} p_{j}(x)p_{j}(y)\ge \delta,
\end{align}
where $\mathcal C(x,y)=\{j \in \mathcal S: \left\|A_{j}(x-y)\right\|\le r\left\|x-y\right\|\}$.    
\end{itemize}
Then the AIMD model has a unique, invariant, and attractive probability distribution.
\end{thm}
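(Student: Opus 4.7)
The plan is to lift the dynamics from $\Sigma_n$ to the space $\mathcal{P}(\Sigma_n)$ of Borel probability measures and exhibit a unique fixed point of the induced Markov operator $T$ defined by
\begin{align*}
(T\mu)(B) = \int_{\Sigma_n}\sum_{j\in\mathcal S} p_j(x)\,\mathbbm{1}_B(A_j x)\,\mu(\mathrm{d}x),
\end{align*}
whose dual action on bounded measurable $\phi:\Sigma_n\to\mathbb R$ is $(T^*\phi)(x)=\sum_{j\in\mathcal S} p_j(x)\,\phi(A_j x)$. The main step is to equip $\mathcal{P}(\Sigma_n)$ with a complete metric that $T$ strictly contracts. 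I would use the Kantorovich--Rubinstein (Wasserstein-$1$) metric, or equivalently Dudley's bounded-Lipschitz metric, both of which are complete on $\mathcal{P}(\Sigma_n)$ because $\Sigma_n$ is compact, and both of which metrize weak convergence.

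The contraction proceeds in two stages. First, I would show that $T^*$ preserves Lipschitz regularity with a useful gain factor. For a test function $\phi$ with $\mathrm{Lip}(\phi)\le 1$ and $\|\phi\|_\infty \le 1$, the identity
\begin{align*}
(T^*\phi)(x)-(T^*\phi)(y) = \sum_{j\in\mathcal S} p_j(x)\bigl[\phi(A_j x)-\phi(A_j y)\bigr] + \sum_{j\in\mathcal S}\bigl[p_j(x)-p_j(y)\bigr]\phi(A_j y)
\end{align*}
decomposes the pullback into a term directly controlled by the average contraction condition (a), bounded by $r\,\|x-y\|$, and a term bounded by $L\,\|x-y\|$ where $L$ aggregates the Lipschitz constants of the $p_j$. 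This gives a quasi-contraction of $T^*$ on the bounded-Lipschitz space after suitable renorming, the dual of what is needed on measures. Condition (b) is then used to promote this average bound to an honest contraction: coupling two copies of the chain started at $x$ and $y$ by synchronizing the random index $J_k$ whenever both chains pick the \emph{same} contracting map, one gets a lower bound $\delta$ on the probability that a joint contraction is realized at each step. Iterating, the expected distance between the coupled chains decays geometrically, and the Kantorovich dual characterization transfers this decay to a contraction rate for $T$ on $\mathcal{P}(\Sigma_n)$.

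With $T$ a strict contraction on a complete metric space, the Banach fixed point theorem simultaneously delivers a unique invariant $\mu^\star$ and the attractiveness $T^k\nu\to\mu^\star$ weakly for every initial $\nu\in\mathcal{P}(\Sigma_n)$. The hard part, in my view, is not the functional-analytic wrapping but honestly extracting a uniform contraction rate from hypotheses (a) and (b) in the presence of state-dependent probabilities: condition (a) alone only bounds a single one-step average, and because $p_j(\cdot)$ varies with the state, naive iteration does not preserve the rate $r$. Condition (b) is precisely the extra irreducibility-style hypothesis that allows the two-point motion $(X_k,Y_k)$ to inherit a joint contractive step with probability $\delta$, and I expect most of the technical bookkeeping to go into constructing this coupling and verifying that the compound rate it produces is strictly less than one.
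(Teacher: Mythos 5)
The paper offers no proof of this statement: it is imported verbatim (as a special case) from Barnsley--Demko--Elton--Geronimo, so your sketch must be judged against the known proof of that result rather than against anything in this note. Your overall architecture --- pass to the Markov operator $T$ on $\mathcal P(\Sigma_n)$, work with the dual $T^*\phi(x)=\sum_j p_j(x)\phi(A_jx)$ on Lipschitz functions, use (a) for average contraction and (b) as the coupling input --- is the right one and matches the structure of the original argument.

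The genuine gap is the central claim that $T$ is a strict contraction for the Wasserstein (bounded-Lipschitz) metric, to which you then apply Banach. Under place-dependent probabilities this is false in general under hypotheses (a)--(b). Your own decomposition gives $\mathrm{Lip}(T^*\phi)\le r\,\mathrm{Lip}(\phi)+L\cdot\operatorname{osc}(\phi)$, where $\operatorname{osc}(\phi)=\sup\phi-\inf\phi$ can be of order $\mathrm{Lip}(\phi)\cdot\operatorname{diam}(\Sigma_n)$, so the one-step factor is $r+L\operatorname{diam}(\Sigma_n)$, which exceeds $1$ unless $L$ is small; iterating only yields $\mathrm{Lip}(T^{*n}\phi)\le r^n\,\mathrm{Lip}(\phi)+\tfrac{L}{1-r}\operatorname{osc}(\phi)$, i.e.\ equicontinuity, not decay. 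The coupling you invoke to repair this does not work as described: you cannot ``synchronize the random index'' of the two chains, since the chain started at $y$ must draw its index from $p_\cdot(y)\neq p_\cdot(x)$; under the maximal coupling the mismatch event has probability of order $L\|x-y\|$, but on that event the images $A_ix$ and $A_jy$ with $i\neq j$ are separated by an amount of order $\operatorname{diam}(\Sigma_n)$, which neither (a) nor (b) controls, and this reproduces exactly the additive term you were trying to remove. The actual proof establishes two separate facts and combines them: (i) from (a) and the regularity of the $p_j$ (a Dini condition in the original; Lipschitz here), the family $\{T^{*n}\phi\}$ is uniformly bounded and equicontinuous; (ii) from (b), which is precisely the probability under the \emph{product} coupling that both chains draw the same index from the contracting set $\mathcal C(x,y)$, one forces $\operatorname{osc}(T^{*n}\phi)\to 0$. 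Together these give $T^{*n}\phi\to\text{const}$ for every Lipschitz $\phi$, which yields existence, uniqueness and attractiveness of the invariant measure with no appeal to Banach's fixed point theorem (and the almost-sure time-average statement used elsewhere in the paper requires, in addition, Elton's ergodic theorem). Your sketch names the right ingredients but rests the entire argument on a one-step contraction estimate that the hypotheses do not supply.
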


\section{A preamble: AIMD and optimization}
The principal motivation for the present paper stems from the observation that an appropriately re-purposed AIMD algorithm can be used to solve large-scale optimization problems in a manner that is cheap from a communications-complexity perspective \cite{Corless2016}. We consider $n$ agents competing for a shared resource and having given utility functions $f_i:[0,1] \to \R_+$, which are $\mathcal{C^1}$ and strictly convex. The aim is to arrive at an optimal distribution of the resource, i.e., to find the optimal point for the optimization problem
\begin{equation*}
\label{eq:network-utility-opt}
    \min_{x\in \Sigma_n} \sum_{i=1}^n f_i(x_i).
\end{equation*}
Agents can apply the place-dependent AIMD algorithm with a low communication complexity and still converge, in a particular sense defined in Section~\ref{sec:preinv}. In such applications, we assume that all agents are informed of capacity events, and choose whether to respond to these events in a probabilistic manner according to an appropriate drop policy. 

In association with \eqref{eq:aimd-dyna}, let us define the long-term averages.
\begin{equation}
    \label{eq:longav}
    \overline{x}_i(k) = \frac{1}{k+1}\sum_{\ell = 0}^k x_i(\ell).
\end{equation}
In \cite{Wirth2019},
Wirth et al. consider the case, where agents respond with probability:
\begin{equation}
\label{eq: probOpt}
	 p_{i}(\overline{x}_i(k)) = \Xi \frac{1}{\overline{x}_{i}(k)} \frac{d} {d x} \Bigg \vert_{x = \overline{x}_{i}(k)}  f_i(\overline{x}_{i}(k)),
	\end{equation}	
where $\Xi$ is an appropriate scaling constant.
To better understand the consequences of having back-off probabilities determined by Equation (\ref{eq: probOpt}), one can imagine the following scenario: two agents compete to have access to a shared resource via AIMD. At capacity event each agent will back off with probabilities that depend on how much access to the shared resource they had in the past: if agent 1 had a larger share of the resource than agent 2, agent 1 will be more likely to back off and vice versa (weighted by their cost functions, $f_1(\cdot)$ and $f_2(\cdot)$).  

A natural extension to this idea comes from a number of applications  that have arisen recently. There, agents simultaneously compete for multiple resources and wish to do this in an optimal manner \cite{Corless2016}. In this case, the back off probabilities are coupled through a multivariate utility (i.e., some function of their share of each resource). For two resources $a$ and $b$, agents have utility functions $f_i: [0,1]^2 \to \R_+$, $(x_{ai},x_{bi}) \mapsto f_i(x_{ai},x_{bi}) $ with the same assumptions as before. At a capacity event $k_a$ associated with resource $a$, the probabilities now take the form:
	\begin{equation}
	 p_{i}(k_a) = \Xi \frac{1}{\overline{x}_{ai}(k_a)} \frac{\partial} {\partial x} \Bigg \vert_{x = \overline{x}_{ai}(k_a)}  f_i(\overline{x}_{ai}(k_a), \overline{x}_{bi}(k_a)),
	 \label{eq: probs}
	\end{equation}	
where the meaning of the quantities are as defined above and where a similar probabilistic update can be defined for resource $b$.\newline 

We wish now to establish that such coupled systems exhibit the ergodic property in the case of finite averaging. To facilitate our analysis we shall make a simplifying assumption in order to establish a contraction with a view to applying Theorem \ref{thm:barns}. As before, we shall assume that the state of all agents evolve in a stochastic manner according to the above drop policy \eqref{eq: probs}. 
However, we will assume finite-time averaging, i.e. we consider a fixed averaging time $N>0$ and averages for $c\in\{a,b\}$, $i=1,\ldots,n$ and $k_c\in\N$ are of the form
\begin{equation}
    \label{eq:finiteav}
    \tilde{x}_{ci}(k_c) = \frac{1}{N}\sum_{\ell = k_c-N+1}^{k_c} x_{ci}(\ell).
\end{equation}
Here, the summation is exclusively over capacity events of the respective resource. 
For the initial phase $0\leq k <N$, the average can be taken over a shorter period of time. As we are interested in long-term behaviour, this is of no relevance.

As the second new feature, we also assume that in our algorithm, after $M$ successive capacity events associated with a resource, agents stop evolving until they receive a {\em global event} notification. More specifically, a central agent sends a global event notification to all agents, when both the resources have experienced $M$ capacity events since the last global event notification.  We refer to this global event as a {\em meta} event. Agents respond to the meta event notification in a probabilistic way simultaneously by multiplicatively decreasing their demands by $\beta_1$ and $\beta_2$ or do not decrease; we call this as {\em global MD phase}. After the global MD phase, agents repeat this process up to the next $M$ capacity events of a resource. As we have mentioned, the objective of this construction is to create a situation by which a contraction for the overall system can be established in order to deduce ergodicity. The pseudocode relative to this procedure is described in Algorithm \ref{algo_multi11}.
\begin{rem}
To give a better intuition of the previous process,  observe that although each resource has different capacity events (and therefore different indices), there is a common notion of time shared between
all resources. We add a notion of ``waiting'' after M capacity events in order to enforce
a regularly occurring sequence of ``time instants'' at which the number of capacity events
for each resource is the same across all resources. This is needed, because one resource may
experience a higher frequency of capacity events than another and therefore complicate the analysis.
\end{rem}

\begin{algorithm}
\caption{Synchronised AIMD algorithm for multiple resources.} 
\label{algo_multi11}
\begin{algorithmic}[1]
   \State $k\gets 0$
   \State $M_a\gets 0$
   \State $M_b\gets 0$
   \While{Evolution is not interrupted}
   \For{$p \in \{a,b\}$}
   \If{Capacity Event for Resource $p$ and $M_p < M$}
       \State $M_p \leftarrow M_p +1$
       \State Agents enter the MD phase for resource $p$
    \Else 
    \If{$M_p = M$}
            \State Agents neither increase nor decrease their share of resource $p$
        \Else
            \State Agents increase their share of resource $p$ (the AI phase)
        \EndIf
    \EndIf
    \EndFor
    \State $k\leftarrow k + 1$
    \If{$M_a = M$ and $M_b = M$}
        \State $M_a \leftarrow 0$
        \State $M_b \leftarrow 0$
    \EndIf
	\EndWhile
\end{algorithmic}
\end{algorithm}

\subsection{A model of coupled resources and the main result}
We shall now construct a model of the system described in the previous section that can be used to establish ergodicity. More specifically, let us consider an AIMD network with $n$ agents and  two resources $a,b$ that evolve and synchronise at every meta event after $M=N$ capacity events. 

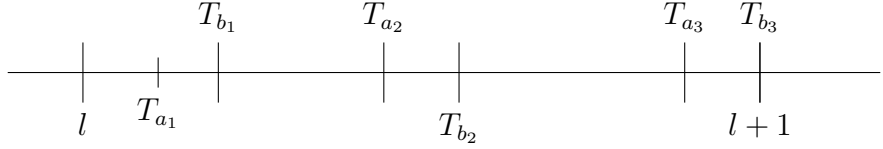
\begin{figure}[ht]  
\centering
\begin{tikzpicture}[scale=2]
\draw[black] (-3.5,0) -- (2.3,0);
\draw[black](-3,+0.2)--(-3, -0.2) node[below] {$l$};
\draw[black](-2.5,+0.1)--(-2.5, -0.1) node[below] {$T_{a_1}$};
\draw[black](-2.1, -0.2)--(-2.1,+0.2) node[above] {$T_{b_1}$};
\draw[black](-1, -0.2)--(-1,+0.2) node[above] {$T_{a_2}$};
\draw[black](-0.5,+0.2) --(-0.5, -0.2)node[below] {$T_{b_2}$};
\draw[black](1, -0.2)--(1,+0.2) node[above] {$T_{a_3}$};
\draw[black](1.5, 0.2)--(1.5,-0.2) node[below] {$l+1$};
\draw[black](1.5,-0.2)--(1.5, 0.2) node[above]{$T_{b_3}$};
\end{tikzpicture}
\caption{Evolution of capacity events of two resource $a$ and $b$. In the figure, $T_{i_j}$ denotes the time of the $j$-th capacity event of resource $i$.}
\label{fig:example}
\end{figure}

As an example, consider $N=3$, as depicted in Figure \ref{fig:example}. After $T_{a_3}$, resource $a$ waits until  resource $b$ reaches its third capacity event, before starting its evolution again. 

For resources $a,b$, let $x_a,x_b\in\Sigma$ denote the vectors of resource distributions among the different agents. 
We denote the sets of matrices describing the two different AIMD processes by $\mathcal{A} = \{A_j; j\in\mathcal{S}\}$, resp. $\mathcal{B} = \{B_j;j\in\mathcal{S}\}$. For $c\in\{a,b\}$, the two systems evolve according to
\begin{align}\label{eq:forced-synch}
x_c\left(k_{c}+1\right)&= A_c\left(k_{c}\right)x_c\left(k_{c}\right),
\end{align}
where \emph{$k_a,k_b\in\N$ denote the indices for capacity events of resource $a$ resp. $b$} and $A_a(k_a)\in\mathcal{A}, A_b(k_b)\in\mathcal{B}$ are chosen according to a probabilistic rule that will be described later. A priori, the time between capacity events depends on the specifics of the AIMD process and so the two resource processes are not synchronized. 

We will consider forced synchronization in the sense that there exists a constant $N\geq 1$ such that the two processes synchronize every $N$ steps. In practice, this means that if one of the processes has experienced $N$ capacity events first, it will wait until the other process also has achieved $N$ capacity events and at that time both processes will start again.

{So far, we have talked almost exclusively about a discrete-time process on the time scale of capacity events, but recall that in \eqref{eq: basic aimd} the variable $T(k)$ refers to continuous time. In order to be able to determine the sequence of capacity events for the different resources, we introduce, as a parenthesis, a continuous-time process evolving in actual time. However, it is important to note that in our results we only concern ourselves with the ergodicity of the discrete-time process.}

Now, we describe a model of forced synchronization.
For each resource, the time elapsed between two capacity events is determined by the state and the chosen AIMD matrix. This dependence is given by continuous functions
$T_a,T_b : \Sigma\times \mathcal{S}\to \R_+$. (We omit the specific form of $T_a,T_b$ as it is of no particular relevance here. It is easily derived.)
Let $\psi_a,\psi_b:\N\to \R_+$ be the functions that map the indices of capacity events to time. As a consequence of the synchronization every $N$ capacity events, the following relation holds true:
\begin{align}
\psi_a\left(lN\right)=\psi_b\left(lN\right), \quad l \in \N.
\end{align}
This is guaranteed by the following iterative definition: Set  $\psi_a(0)=\psi_b(0)=0$ and for $c\in \{a,b\}$, $l=0,1,2,,\ldots$
\begin{align}
\psi_c(lN + k+1 ) &:= \psi_c(lN+k) + T_c(x_c(lN+k),j_c(lN+k)), \quad \text{ for $k=0,\ldots,N-2$}, \nonumber \\
\tau_l &:= \max \left\{ \sum_{i=0}^{N-1} T_a(x_a(lN+i),j_a(lN+i)), \sum_{i=0}^{N-1} T_b(x_b(lN+i),j_b(lN+i))\right\}, \nonumber \\
\psi_a((l+1)N ) &:= \psi_b((l+1)N ) := 
\psi_a(lN ) + \tau_l. \label{eq:sync}
\end{align}

Given a sequence of matrices $\{A_a(k)\} \in \mathcal{A}^\N$, $\{A_b(k)\}\in \mathcal{B}^\N$, we define the corresponding transition operators $\Phi_a, \Phi_b$ for
$c\in\{a,b\}$ by
\begin{equation}\label{eq:transi-op}
\Phi_c(k,k) = I_n, \quad \Phi_c(m+1,k) = A_c(m+1)\Phi_c(m,k), \quad \forall m\geq k\in \N.
\end{equation}
Then, due to the synchronization guaranteed by (\ref{eq:sync}) at (what we call) \emph{meta-events}, the evolution of the states at meta-events $l\in 0,1,2,\dots$, is given by

\begin{equation}
\begin{bmatrix}
{x}_a((l+1)N) \\
{x}_b((l+1)N) 
\end{bmatrix}
=\begin{bmatrix}
\Phi_a((l+1)N, lN) & 0 \\
0 & \Phi_b((l+1)N,lN)
\end{bmatrix}\begin{bmatrix}
{x}_a(lN) \\
{x}_b(lN) 
\end{bmatrix}.
\label{eq: partial equations}
\end{equation}
Notice, however, that the previous equation is not sufficient to determine the evolution of the whole system as, in general, the matrices $A_c(k)$ and thus the matrices $\Phi_c((l+1)N, lN)$  are random matrices, whose values depend on the average allocation, $\tilde{x}_c$, over some window of capacity events. Accordingly, in this paper we will consider averages over finite time windows of length $N$. \newline

\emph{Remark:} There has been a simplifying choice that $N$, the length of the averaging time, equals $M$, the length of the time window of meta events. Also we assume that meta event windows for different resources are equal. This merely has the purpose of not over-encumbering the analysis and theorems with notation. The following results can be  extended to the more general case of time windows of length $M\neq N$ and $M_a\neq M_b$ for resources $a$ and $b$.

\subsection{Finite averaging}
Recall the definition of the averages $\tilde{x}_{ci}$, $c\in\{a,b\}, i=1,\ldots,n$, of length $N$ from \eqref{eq:finiteav}.
Let $\rho_{a,j}, j\in \mathcal{S}_a$ $\rho_{b,j}, j\in \mathcal{S}_b$ be probability functions as in \eqref{eq:probfuncs}. For example, they might be defined according to (\ref{eq: probs}).  The probabilistic law for $c\in \{a,b\}$ is 
\begin{align}
\mathbb P\left(  A_{c}(k) = A_{c,j} \ |\ \tilde{x}_{c}(k) \right)=\rho_{c,j}\left(\tilde{x}_a(k),\tilde{x}_b(k)\right).
\label{eq:probs2}
\end{align}
The probabilities in the interval $[lN-N+1, lN]$, determined by the averages $\tilde{x}_c(k)$, together with equations (\ref{eq: partial equations}) are sufficient  to determine the evolution of the meta-events.

Accordingly, we introduce the vector of partial averages over the interval $[lN-N+1, lN]$ as follows:
\begin{align}
z_{a}(l) &\triangleq \left[{x}_{a}(lN)^{\top} \quad \frac{1}{2}\sum_{i=0}^1 x_{a}(lN -i)^{\top} \quad \ldots \quad \frac{1}{N}\sum_{i=0}^{N-1} x_{a}(lN -i)^{\top} \right]^{\top},\nonumber\\
z_{b}(l) &\triangleq\left[{x}_{b}(lN)^{\top} \quad \frac{1}{2}\sum_{i=0}^1 x_{b}(lN -i)^{\top} \quad \ldots \quad \frac{1}{N}\sum_{i=0}^{N-1} x_{b}(lN -i)^{\top} \right]^{\top}.
\nonumber
\end{align}
Now we define for $l\in\N$
\begin{align}
\label{eq:highdimsys}
\zeta(l)= \left[z_{a}(l)^{\top}\quad z_{b}(l)^{\top}\right]^{\top} \in \R^{2nN}.
\end{align}
The evolution of $\zeta(l)$ is described by \begin{align}\label{eq:MC-FW}
\zeta(l+1)=\Gamma(l) \zeta(l),
\end{align}
where 
\begin{equation} \label{mat:Gamma}
\Gamma(l)\triangleq\begin{bmatrix}
\gamma_a(l) & 0 \\
0 & \gamma_b(l)
\end{bmatrix},
\end{equation}
and for $c\in \{a,b\}$
\begin{align}\label{eq:mat-gamma}
\gamma_c(l)=\begin{bmatrix}
\Phi_c((l+1)N,lN)&0&\ldots&0\\
\frac{1}{2}\sum_{i=0}^1\Phi_c((l+1)N-i,lN) & 0&\ldots &0\\
\frac{1}{3}\sum_{i=0}^2\Phi_c((l+1)N-i,lN)& 0 & \ldots&0\\
\vdots& \vdots& \vdots & \vdots\\
\frac{1}{N}\sum_{i=0}^{N-1}\Phi_c((l+1)N-i,lN)&0&\ldots& 0
\end{bmatrix} \in \R^{nN\times nN}.
\end{align}

Note that by construction $\Gamma(l)$ maps $\Sigma^{2N}$ into itself, so we may use these matrices to construct an equivalent IFS on this space. All that is required is to construct the appropriate probability functions. Of particular interest will be the matrix $\Gamma_1$, defined as the matrix obtained by choosing the full drop matrices $A_1,B_1$ in all instances. The corresponding matrices $\gamma_{1c}$, $c\in\{a,b\}$ are then of the form
(with $C_1$ equal to $A_1$ or $B_1$)
\begin{equation}
\label{eq:fulldropgamma}
    \gamma_{1c} = \begin{bmatrix}
C_1^N&0&\ldots&0\\
\frac{1}{2}\sum_{i=0}^1 C_1^{N-i} & 0&\ldots &0\\
\vdots& \vdots& \vdots & \vdots\\
\frac{1}{N}\sum_{i=0}^{N-1} C^{N-i}&0&\ldots& 0
\end{bmatrix} .
\end{equation}

The possible matrices $\Gamma$ occurring in \eqref{eq:MC-FW} are uniquely determined by the sequences $A_{a,j_0}, \ldots, A_{a,j_{N-1}}$ in $\mathcal{A}^{N}$ and $A_{b,\ell_0}, \ldots, A_{b,\ell_{N-1}}$ in $\mathcal{B}^{N}$. Thus, we require probability functions $p_{a,\nu},p_{b,\mu}:\Sigma^{2N}\to [0,1]$ for all $(\nu,\mu) \in \mathcal{S}_a^N\times\mathcal{S}_b^N$. Given the probability functions $\rho_{c,j}, c\in\{a,b\},j\in \mathcal{S}_c$ from \eqref{eq:probs2} the probability of a realization of the sequences $\nu=(j_0,\ldots,j_{N-1})$, $\mu=(\ell_0,\ldots,\ell_{N-1})$ can be computed recursively.
To this end,  let $z_{a,k}$, $k=1,\ldots,n$, denote the subvectors of $z_a\in \R^N$ and note that $(z_a,z_b)\in \Sigma_n^{2N}$ and the sequences $\nu,\mu$ determine the order in which capacity events occur for the two resources in a deterministic manner. Also note that using the entries of $\zeta$
we have for $k=0,\ldots,N-1$ that
\begin{equation}
\label{eq:avhelper}
   \tilde{x}_c(lN+k) = \frac{N-k}{N}z_{c,N-k}+\frac{1}{N}\sum_{i=0}^{k-1} \Phi_c(i+1,0) z_{c,1}. 
\end{equation}
From \eqref{eq:probs2} we thus obtain for the probabilities for $j_0,\ldots,j_{N-1}$
\begin{align}
\label{eq:highdimprobs}
    \mathbb{P}(j_0 =j| \zeta) 
    &= \rho_{a,j} (z_{a,N},z_{b,N}) \\
\intertext{and if the $k$th capacity event for resource $a$ occurs after exactly $m$ capacity events have occurred for resource $b$}
    \mathbb{P}(j_k = j | \zeta, j_0,\ldots,j_{k-1},\ell_0,\ldots,\ell_m) &=  \rho_{a,j} 
    \left( \tilde{x}_a(lN+k), \tilde{x}_b(lN+m) \right) , 
    \label{eq:highdimprobs2}
\end{align}
with a similar formula for the probabilities for $\mu$.
Note that the argument of the function on the right can be computed using the knowledge of $\zeta$ and the indices $j_0,\ldots,j_{k-1},\ell_0,\ldots,\ell_m$ with the help of \eqref{eq:avhelper}. The definition of the functions $p_{\nu,\mu} = p_{a,\nu}p_{b,\mu}: \Sigma^{2N}\to [0,1]$, $(\nu,\mu)\in \mathcal{S}_a^N\times \mathcal{S}_b^N$ now just uses standard manipulations. We omit the details.

\subsection{Ergodicity of the Synchronised AIMD}

We are now ready to present the main results of this paper. The conditions of Theorem~\ref{thm:barns} require a notion of average contractivity on the state space. If we want to apply this to the IFS defined by 
\eqref{eq:highdimsys} with the probabilities 
described in \eqref{eq:highdimprobs}, we need to show that the matrices in \eqref{eq:mat-gamma} have a contractivity property. To this end, we introduce on $\R^{nN}$ the norm
\begin{equation}
    \left\| \begin{bmatrix}
     x_1^\top, \ldots, x_N^\top 
    \end{bmatrix}^\top \right\|_{N,1} \triangleq \max_{i=1,\ldots,N} \|x_i\|_1 , \quad x = \begin{bmatrix}
     x_1^\top, \ldots, x_N^\top 
    \end{bmatrix}^\top \in \R^{nN}, x_i\in\R^n, i=1,\ldots,N.
\end{equation}

We denote by $\mathcal{A}_\Gamma$ the set of matrices that can be constructed from the set $\mathcal{A},\mathcal{B}$ in the form \eqref{eq:mat-gamma}. To start, consider the following lemma:

\begin{lem}\label{lem:condi-a-b-c}
\begin{itemize}
\item[a)]For all $\Gamma \in \mathcal{A}_\Gamma$ and $\zeta\in \mathbb R^{2nN}$, we have 
\begin{align}
\left \| \Gamma \zeta\right\|_{N,1}\le \left\|\zeta\right\|_{N,1}.
\end{align}
\item[b)] The subspace 
\begin{align}
\mathcal{E}= \mathrm{ker} \begin{bmatrix}
e^\top & 0_{(N-1)n} & 0_n & 0_{(N-1)n} \\ 0_n  & 0_{(N-1)n} &e^\top &0_{(N-1)n}
\end{bmatrix} =\left\{\zeta=\left[z_{a}^{\top}\quad z_{b}^{\top}\right]^{\top} \in \R^{2nN}: e^\top z_{a,1}=e^\top z_{b,1} = 0\right\}  
\end{align}
is invariant under all $\Gamma\in \mathcal{A}_\Gamma$.
\item[c)]  Recall $\Gamma_1\in \mathcal{A}_\Gamma$ from \eqref{eq:fulldropgamma}. Then, for all $\zeta \in \mathcal{E}$
\begin{align}
 \left\|\Gamma_1\zeta\right\|_{N,1}\le q\left\|\zeta\right\|_{N,1}   ,
\end{align}
where
\begin{align}
q= \text{max}\left(\frac{1}{N}\sum_{i=1}^{N} \beta_a^i,\frac{1}{N}\sum_{j=1}^{N} \beta_b^j \right)<1,
\end{align}
and where $\beta_a$ and $\beta_b$ are the multiplicative-decrease parameters for resource $a$ and resource $b$, respectively.
\end{itemize}
\end{lem}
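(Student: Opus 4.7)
The plan is to exploit two structural features uniformly across all three parts: first, every transition operator $\Phi_c(m,lN)$ is a product of column-stochastic AIMD matrices, hence itself column stochastic, giving both $\ell^1$-non-expansiveness and the left-invariance $e^\top\Phi_c=e^\top$; second, the matrix $\Gamma$ is block diagonal and each block $\gamma_c$ (see \eqref{eq:mat-gamma}) has nonzero entries only in its first block-column, so the $k$th block of $\gamma_c z_c$ is simply $\frac{1}{k}\sum_{i=0}^{k-1}\Phi_c((l+1)N-i,lN)\,z_{c,1}$. For part (a), the triangle inequality combined with non-expansiveness yields $\|(\gamma_c z_c)_k\|_1\le \|z_{c,1}\|_1\le \|z_c\|_{N,1}$; maximising over $k$ and then over $c\in\{a,b\}$ gives (a).

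For part (b), I only need to track the first block of each $\gamma_c z_c$, because the constraint defining $\mathcal{E}$ involves only $z_{a,1},z_{b,1}$. That first block equals $\Phi_c((l+1)N,lN)\,z_{c,1}$, and $e^\top\Phi_c=e^\top$ immediately yields $e^\top(\gamma_c z_c)_1=e^\top z_{c,1}=0$ whenever $\zeta\in\mathcal{E}$, so $\mathcal{E}$ is $\Gamma$-invariant.

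For part (c), the key algebraic observation is that a full-decrease AIMD matrix with common decrease parameter $\beta_c$ has the form $C_1=\beta_c I+(1-\beta_c)\,\alpha e^\top/(e^\top\alpha)$, so the rank-one term annihilates any $v\in\ker(e^\top)$ and we obtain $C_1 v=\beta_c v$, whence $C_1^{\,j}v=\beta_c^{\,j}v$ by induction. Substituting into \eqref{eq:fulldropgamma} and using $z_{c,1}\in\ker(e^\top)$ when $\zeta\in\mathcal{E}$, the $k$th block of $\gamma_{1c}z_c$ collapses to
\begin{equation*}
(\gamma_{1c}z_c)_k=\Bigl(\tfrac{1}{k}\sum_{j=N-k+1}^{N}\beta_c^{\,j}\Bigr)z_{c,1}.
\end{equation*}
The coefficient is the mean of the last (hence smallest) $k$ entries of the non-increasing sequence $\beta_c,\beta_c^{2},\ldots,\beta_c^{N}$, so it is bounded by the mean of all $N$ entries, $\frac{1}{N}\sum_{j=1}^{N}\beta_c^{\,j}$. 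Hence $\|\gamma_{1c}z_c\|_{N,1}\le \frac{1}{N}\sum_{j=1}^{N}\beta_c^{\,j}\,\|z_{c,1}\|_1\le q\,\|z_c\|_{N,1}$; combining the two blocks and noting that $\frac{1}{N}\sum_{j=1}^N\beta_c^{\,j}<1$ completes (c).

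The main obstacle I expect is the ``tail-mean $\le$ overall mean'' step; if a cleaner justification is required, I would verify monotonicity of $f(k)\triangleq\tfrac{1}{k}\sum_{j=N-k+1}^{N}\beta_c^{\,j}$ directly, computing $k(k+1)\bigl(f(k+1)-f(k)\bigr)=k\beta_c^{\,N-k}-\sum_{j=N-k+1}^N\beta_c^{\,j}$, which has the sign of $k-\sum_{i=1}^{k}\beta_c^{\,i}$ and is strictly positive since $\beta_c<1$.
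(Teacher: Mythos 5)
Your parts (a) and (b) coincide with the paper's proof: column stochasticity of the products $\Phi_c$, the block structure of $\gamma_c$ (only the first block-column is nonzero), the triangle inequality, and preservation of $e^\top x=0$ under column-stochastic matrices. Nothing to add there.

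Part (c) takes a genuinely different route, and it carries a hidden restriction. You write the full-decrease matrix as $C_1=\beta_c I+(1-\beta_c)\,\alpha e^\top/(e^\top\alpha)$ and conclude $C_1v=\beta_c v$ on $\ker(e^\top)$. That identity requires all agents sharing resource $c$ to have the \emph{same} decrease parameter $\beta_c$. In the paper's model the $\beta_i$ are agent-dependent (the full-decrease matrix is $\mathrm{diag}(\beta)+(e^\top\alpha)^{-1}\alpha(e^\top-\beta^\top)$ with possibly distinct $\beta_i<1$, and the simulations in Table 1 indeed use four different values of $\beta_a$), so $\ker(e^\top)$ is invariant but $C_1$ does not act as a scalar on it, and your collapse $(\gamma_{1c}z_c)_k=\bigl(\tfrac{1}{k}\sum_{j=N-k+1}^N\beta_c^{\,j}\bigr)z_{c,1}$ fails. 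The paper instead invokes the contraction property of full-drop AIMD matrices (Lemma 3.5 of the cited reference): $e^\top x=0$ implies $\|C_1x\|_1\le\beta_c\|x\|_1$ \emph{and} $e^\top C_1x=0$, with $\beta_c$ the relevant (maximal) decrease factor; iterating gives $\|C_1^{\,j}x\|_1\le\beta_c^{\,j}\|x\|_1$, and the triangle inequality then yields exactly the block bound
\begin{equation*}
\Bigl\|\tfrac{1}{k}\sum_{i=0}^{k-1}C_1^{\,N-i}x\Bigr\|_1\le\tfrac{1}{k}\sum_{i=0}^{k-1}\beta_c^{\,N-i}\,\|x\|_1 ,
\end{equation*}
after which your ``tail-mean $\le$ overall mean'' observation (which is correct, and cleaner than the paper's terse ``maximized for $k=N$'') finishes the argument. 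So the repair is local: replace the eigenvector identity by the norm inequality plus invariance of $\ker(e^\top)$, and everything else in your write-up survives verbatim. What your approach buys, when it applies, is an exact formula for the blocks rather than an upper bound; what it costs is generality with respect to heterogeneous agents, which is the case the paper actually needs.
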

\begin{proof}
a) The block matrices in the first block column of $\Gamma$ are convex combinations of products of column stochastic matrices and, therefore, column stochastic, see  \eqref{eq:mat-gamma}. In particular, the induced $1$-norm of these matrices is equal to $1$. 
This shows
\begin{equation*}
 \left \| \Gamma \zeta\right\|_{N,1}\le
\max_{c=a,b} \max_{k=1,\ldots,N} \left \| \frac{1}{k}\left(\sum_{i=0}^{k-1}\Phi_c((l+1)N-i,lN)\right) z_{c,1}\right\|_{1}
 \leq \max \{ \|z_{a,1}\|_1, \|z_{b,1}\|_1\} \leq
 \left\|\zeta\right\|_{N,1}.
\end{equation*}
b) The upper left $n\times n$ block of a matrix $\gamma_a$ is the product of column stochastic matrices and, therefore, column stochastic. For a column stochastic matrix $M$, it is easy to see that $e^\top x = 0$ implies $e^\top M x = e^\top x = 0$. The same argument applies to the matrices $\gamma_b$.

c) Recall that we denote by $A_1\in\mathcal{A}$ the complete drop matrix for resource $a$. From \cite[Lemma 3.5]{Corless2016} it is known that $e^\top x =0$ implies $\|Ax\|_1 \leq \beta \|x\|_1$ as well as $e^\top Ax =0$. For the block entries of the first row of $\gamma_{1a}$ this implies 
\begin{equation*}
    \left\| \frac{1}{k}\left(\sum_{i=0}^{k-1} A_1^{N-i}\right) x\right\|_{1} 
    \leq \frac{1}{k} \left(\sum_{i=0}^{k-1} \beta_a^{N-i}\right) \|x\|_1.
\end{equation*}
As $\beta_a<1$, the constant on the right is maximized for $k=N$ (over the options $k=1,\ldots,N$). The claim now follows by rearranging the sum and maximizing with respect to $a,b$.
\end{proof}

\textcolor{black}{This part of the lemma shows that the iteration of random choices of $\Gamma$ matrices are contractive when studied with respect to a suitable norm. This is crucial for establishing existence of a unique invariant and attractive measure for the associated Markov chain.}

\begin{thm}
Consider the AIMD algorithm with finite averaging and 
Lipschitz continuous probability functions $\rho_{a,j}, j\in \mathcal{S}_a$, $\rho_{b,j}, j\in \mathcal{S}_b$. Assume that the probabilities $\rho_{a,1}, \rho_{b,1}$ of the full drop matrices for both resources are strictly positive on $\Sigma^{2}_n$.
Then for all $N\geq 1$, the IFS \eqref{eq:MC-FW} associated to finite averaging over length $N$ with probability functions given by \eqref{eq:highdimprobs}, \eqref{eq:highdimprobs2} has the following properties:
There exists a unique invariant and attractive measure $\pi^{2N}$ on $\Sigma^{2N}$. Furthermore, for all initial conditions $\zeta(0)\in \Sigma^{2N}$, we have almost surely
\begin{align}
\lim_{k\to \infty} \frac{1}{k+1} \sum_{j=0}^{k} \zeta(j)= \int_{\Sigma^{2N}} \zeta \mathrm{d}\pi^{2N}(\zeta)= \mathbb E_{\pi^{2N}}\left[\zeta\right].
\end{align}
\end{thm}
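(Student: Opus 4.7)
The plan is to verify the two hypotheses of Theorem~\ref{thm:barns} for the IFS $(\Sigma^{2N},\mathcal{A}_\Gamma,\{p_{\nu,\mu}\})$ defined by \eqref{eq:MC-FW} with probabilities given by \eqref{eq:highdimprobs}--\eqref{eq:highdimprobs2}, working with respect to the norm $\|\cdot\|_{N,1}$ on $\R^{2nN}$ (applied blockwise to $z_a,z_b$). Note that $\mathcal{A}_\Gamma$ is finite since $\mathcal{A},\mathcal{B}$ are finite, and the probability functions $p_{\nu,\mu}$ are Lipschitz on $\Sigma^{2N}$: they are built by \eqref{eq:highdimprobs}--\eqref{eq:highdimprobs2} as products of the Lipschitz functions $\rho_{c,j}$ composed with the affine maps $\zeta\mapsto \tilde{x}_c(lN+k)$ in \eqref{eq:avhelper}. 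For any $\zeta,\zeta'\in \Sigma^{2N}$, the difference $\zeta-\zeta'$ lies in the subspace $\mathcal{E}$ of Lemma~\ref{lem:condi-a-b-c}(b), because $e^\top z_{a,1}=e^\top z_{a,1}'=1$ and similarly for $b$; hence the contractivity analysis of Lemma~\ref{lem:condi-a-b-c} applies to all such differences.

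Next, I would extract a uniform lower bound on the probability of the ``all full-drop'' sequence. Since $\rho_{a,1},\rho_{b,1}$ are continuous and strictly positive on the compact set $\Sigma^2_n$, they admit a common lower bound $\rho_{\min}>0$. By \eqref{eq:highdimprobs2}, every conditional factor appearing in $p_{(1,\ldots,1),(1,\ldots,1)}(\zeta)$ is of the form $\rho_{c,1}(\tilde{x}_a(\cdot),\tilde{x}_b(\cdot))\ge \rho_{\min}$. Multiplying over the $2N$ capacity events gives
\begin{equation*}
p_{\min}\;\triangleq\;\inf_{\zeta\in \Sigma^{2N}} p_{(1,\ldots,1),(1,\ldots,1)}(\zeta)\;\ge\;\rho_{\min}^{2N}\;>\;0.
\end{equation*}

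To verify condition~(a), I would split the sum over $(\nu,\mu)$ into the single full-drop term and the rest. By Lemma~\ref{lem:condi-a-b-c}(a), every $\Gamma\in \mathcal{A}_\Gamma$ is non-expansive on $\zeta-\zeta'\in\mathcal{E}$, while by Lemma~\ref{lem:condi-a-b-c}(c), $\Gamma_1$ contracts by the factor $q<1$. Hence
\begin{equation*}
\sum_{\nu,\mu} p_{\nu,\mu}(\zeta)\,\|\Gamma_{\nu,\mu}(\zeta-\zeta')\|_{N,1}
\;\le\;\bigl(p_{(1,\ldots,1),(1,\ldots,1)}(\zeta)\,q+(1-p_{(1,\ldots,1),(1,\ldots,1)}(\zeta))\bigr)\|\zeta-\zeta'\|_{N,1}
\;\le\;r\,\|\zeta-\zeta'\|_{N,1},
\end{equation*}
with $r=1-(1-q)p_{\min}\in(q,1)$. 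For condition~(b), observe that by Lemma~\ref{lem:condi-a-b-c}(c) we have $\|\Gamma_1(\zeta-\zeta')\|_{N,1}\le q\|\zeta-\zeta'\|_{N,1}\le r\|\zeta-\zeta'\|_{N,1}$ since $q<r$, so the full-drop index pair is in $\mathcal{C}(\zeta,\zeta')$ for every $\zeta,\zeta'$. Thus $\sum_{(\nu,\mu)\in\mathcal{C}(\zeta,\zeta')} p_{\nu,\mu}(\zeta)p_{\nu,\mu}(\zeta')\ge p_{\min}^2\triangleq \delta>0$. Theorem~\ref{thm:barns} then yields a unique invariant and attractive probability measure $\pi^{2N}$ on $\Sigma^{2N}$.

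Finally, applying the ergodic conclusion of Theorem~\ref{thm:barns} componentwise with the continuous test functions $\phi_i(\zeta)=\zeta_i$, $i=1,\ldots,2nN$, yields the stated convergence of $\frac{1}{k+1}\sum_{j=0}^k \zeta(j)$ to $\mathbb{E}_{\pi^{2N}}[\zeta]$ almost surely, for every initial condition $\zeta(0)\in\Sigma^{2N}$. The main obstacles are bookkeeping rather than deep: checking that all differences $\zeta-\zeta'$ indeed lie in $\mathcal{E}$ (so that Lemma~\ref{lem:condi-a-b-c}(c) can be invoked), and that the compositional structure of $p_{\nu,\mu}$ preserves Lipschitz continuity and admits the uniform positive lower bound $p_{\min}$; once these are in place, the contraction estimate and the inclusion of the full-drop index in $\mathcal{C}(\zeta,\zeta')$ fall into line immediately.
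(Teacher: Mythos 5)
Your proposal is correct and follows essentially the same route as the paper: verify the two hypotheses of Theorem~\ref{thm:barns} by splitting the sum into the full-drop term (contractive by Lemma~\ref{lem:condi-a-b-c}(c), since $\zeta-\zeta'\in\mathcal{E}$) and the rest (non-expansive by Lemma~\ref{lem:condi-a-b-c}(a)), using a uniform positive lower bound on the full-drop probability from compactness; your $r=1-(1-q)p_{\min}$ is exactly the paper's $\hat{p}q+(1-\hat{p})$. The only additions are minor elaborations (the explicit bound $p_{\min}\ge\rho_{\min}^{2N}$ and the componentwise application of the ergodic limit with coordinate test functions), which the paper leaves implicit.
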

\begin{proof}
First note that the probability functions $p_{(\mu,\nu)}$, $(\mu,\nu) \in \mathcal{S}_a^N \times \mathcal{S}_b^N$ constructed using \eqref{eq:highdimprobs} and \eqref{eq:highdimprobs2} for the lifted system are obtained as products of the functions $\rho_{a,j}, j\in \mathcal{S}_a$ $\rho_{b,j}, j\in \mathcal{S}_b$. Therefore, they are also Lipschitz continuous. In addition, as $\rho_{a,1}, \rho_{b,1}$ are strictly positive on $\Sigma^{2}_n$, there exists a constant $\hat{p}>0$, such that for the probability function $p_1$ associated with $\Gamma_1$ we have $p_1(\zeta) >\hat{p}$ for all $\zeta\in \Sigma^{2N}_n$.

From now on, we abbreviate $\kappa=(\mu, \nu)$.
In order to apply 
Theorem~\ref{thm:barns}, we must show that the sufficiency conditions stated there are satisfied, i.e., that there exist $r<1$ and $\delta>0$ such that for all $\zeta,\eta \in \Sigma^{2N}$, 
\begin{align}\label{eq:contra}
\sum_{\kappa\in \mathcal{S}_a^N \times \mathcal{S}_b^N}p_{\kappa}(\zeta)\left \|\Gamma_\kappa(\zeta-\eta)\right\|_{N,1}\le r \|\zeta-\eta\|_{N,1},
\end{align}
and, for $\mathcal{C}(\zeta,\eta) = \{\kappa \in  \mathcal{S}_a^N \times \mathcal{S}_b^N;\left\|\Gamma_\kappa(\zeta-\eta)\right\|_{N,1}\le r \|\zeta-\eta\|_{N,1} \} $ we require
\begin{align}\label{eq:prob-condi}
\sum_{\kappa \in \mathcal{C}(\zeta,\eta)} p_{\kappa}(\zeta)p_{\kappa}(\eta) \ge \delta.
\end{align}
In order to show that \eqref{eq:contra} holds, first notice that when $\zeta,\eta\in \Sigma^{2N}$, we have $\zeta-\eta\in \mathcal{E}$. Hence, Lemma \ref{lem:condi-a-b-c} (c) implies the existence of a constant $q<1$ such that
\begin{align*}
\left \|\Gamma_1(\zeta-\eta)\right\|_{N,1}\le q \|\zeta-\eta\|_{N,1}.
\end{align*}
Using the existence of the constant $\hat{p}>0$ and Lemma \ref{lem:condi-a-b-c} (a), we obtain that for all $\zeta,\eta \in \Sigma^{2N}$
\begin{align}
\sum_\kappa p_{\kappa}(\zeta)\left \|\Gamma_\kappa(\zeta-\eta)\right\|_{N,1}\le p_{1}(\zeta)q+ (1- p_{1}(\zeta))\le \hat{p}q + (1- \hat{p}) \triangleq r <1.
\end{align}
To demonstrate \eqref{eq:prob-condi}, note that $q\le r$; hence
\begin{equation*}
\sum_{\kappa \in \mathcal{C}(x,y)} p_{\kappa}(x)p_{\kappa}(y)  \ge p_{1}(y)p_{1}(x)\ge \hat{p}^{2}>0.   
\end{equation*}
The claim now follows from an application of Theorem~\ref{thm:barns}.
\label{thm:main result}
\end{proof}

{\bf Comment :} {Before proceeding, it is important to place the above theorem in context. Establishing ergodiciy is important for a number of reasons. At a very basic level, it means that results, from a probabilistic perspective, are independent of initial conditions. This is important from the perspective of developing distributed optimization algorithms; a basic feature of such algorithms is that they should be uniformly convergent irrespective of initial conditions. An additional implication of ergodicity is that, when exploring such algorithms from a practical perspective using simulations, rigorous conclusions can be drawn from these simulations. Finally, from a very practical perspective, ergodicity is an essential feature of any algorithm that underpins economic contracts, in which metrics such as Quality of Service are important and need to be independent of initial conditions, such as in the sharing economy.}\newline 

\section{Simulations}
To illustrate the main result, we now include the following simulations. The objective is to demonstrate the independence of initial conditions with respect to a number of moments for a given system. More specifically, we present 150 Monte Carlo simulations with 4 agents, 2 resources (both with capacity 1), and initial conditions randomly chosen in $[0, 0.25]$. The network parameters are described in Table \ref{tab: param}.

\begin{table}[ht!]
    \centering
    \begin{tabular}{c|c|c|c|c}
        Agent & $\alpha_a$ & $\beta_a$ & $\alpha_b$ & $\beta_b$  \\
         \hline 
         \hline
        1 & 0.01 & 0.95 & 0.07 & 0.65 \\
        2 & 0.08 & 0.9 & 0.08 & 0.7 \\
        3 & 0.61 & 0.85 & 0.025 & 0.8 \\
        4 & 0.045 & 0.75 & 0.02 & 0.85 \\
    \end{tabular}
    \caption{Parameters for the 4 agents and the 2 resources.}
    \label{tab: param}
\end{table}

At each capacity event, each agent  multiplicatively decreases their share of resource $a$ or $b$ with probability 
\begin{equation}
    p_i(k) = \frac{1}{2N}\left(\sum_{\ell=0}^N\tilde{x}_{a,i}(k-\ell) +  \sum_{j=0}^N\tilde{x}_{b,i}(k-j) \right).
\end{equation}
Figure \ref{fig1-avg} shows one realization of the system. As can be seen, when resource $a$ (the blue curve) reaches the $5$\textsuperscript{th} capacity event in a row, it stops evolving, and it waits for resource $b$ (the green line) to reach the $5$\textsuperscript{th} capacity event. Once that happens, the two resources are again synchronised and the evolution starts again. 

Figure \ref{fig2}
present the evolution of the mean and variance (over 150 Monte Carlo simulations) of allocations of resources $a$ and $b$ over time. As can be seen, despite the difference in initial conditions, every agent eventually converges to the same mean and variance for both resources, as predicted by Theorem \ref{thm:main result}.

\begin{figure}[t]
\centering
	\includegraphics[width=0.45\textwidth]{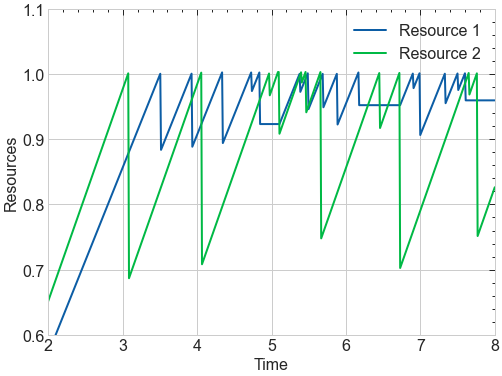}
	\caption{The evolution of the utilization of two resources under the AIMD algorithm with $N=5$.}
	\label{fig1-avg} 
\end{figure}

\begin{figure}[h!]
	\centering
\begin{subfigure}[t]{0.45\textwidth}
	\centering
	\includegraphics[width=\textwidth]{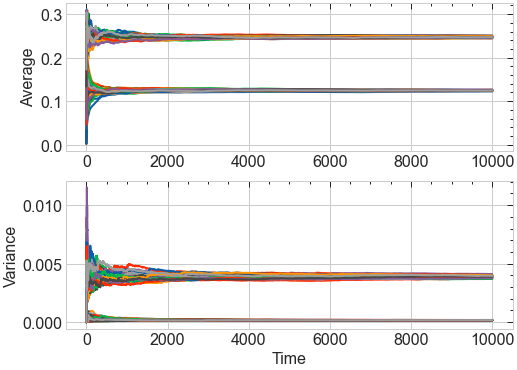}
	\caption{Agent 1}
	\label{fig2-gradGE} 
\end{subfigure}
\begin{subfigure}[t]{0.45\textwidth}
	\centering
	\includegraphics[width=\textwidth]{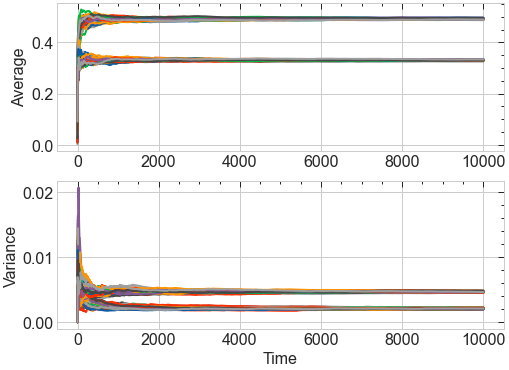}
	\caption{Agent 2}
	\label{fig2-gradGE2} 
\end{subfigure}
\\

\begin{subfigure}[t]{0.45\textwidth}
	\centering
	\includegraphics[width=\textwidth]{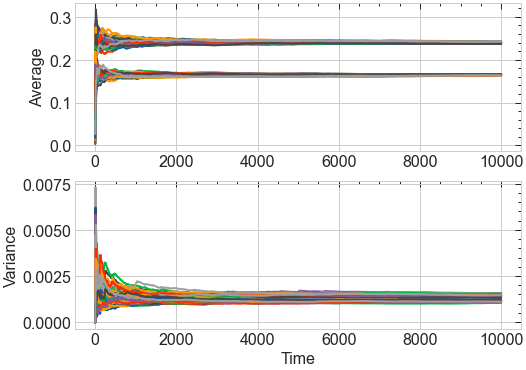}
	\caption{Agent 3}
	\label{fig2-gradGE3} 
\end{subfigure}
\begin{subfigure}[t]{0.45\textwidth}
	\centering
	\includegraphics[width=\textwidth]{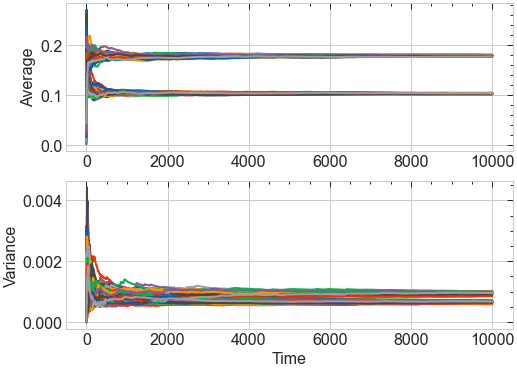}
\caption{Agent 4}
	\label{fig2-gradGE4} 
\end{subfigure}
	\caption{The evolution of the mean and variance (across 150 Monte Carlo simulations) of the allocation $x$ for the same two resources and four agents as in Figure \ref{fig1-avg}.}
	\label{fig2}
\end{figure}

\section{Conclusions and Future Work}

In this work, we explored the behaviour of two AIMD networks that regularly synchronise with one another. We proved that such a system inherits the ergodic property of the two individual AIMD networks and this result was showcased through extensive Monte Carlo simulations. This result will be the basis for an extension of this analysis to a broader class of interconnected AIMD networks.

\FloatBarrier
\bibliographystyle{plain}
\bibliography{sample}

\begin{thebibliography}{10}

\bibitem{Syed2020}
S.~E. Alam, F.~Wirth, J.~Y. Yu, and R.~Shorten.
\newblock The convergence of finite-averaging of {AIMD} for distributed
  heterogeneous resource allocations.
\newblock {\em arXiv:2001.08083 [math.OC]}, 2020.

\bibitem{Barnsley1988}
M.~F. Barnsley, S.~G. Demko, J.~H. Elton, and J.~S. Geronimo.
\newblock Invariant measures for {M}arkov processes arising from iterated
  function systems with place-dependent probabilities.
\newblock {\em Annales de l'I.H.P. Probabilités et statistiques},
  24(3):367--394, 1988.

\bibitem{Chiu1989}
D.~Chiu and R.~Jain.
\newblock Analysis of the increase and decrease algorithms for congestion
  avoidance in computer networks.
\newblock {\em Computer Networks and ISDN Systems}, 17(1):1--14, 1989.

\bibitem{Corless2016}
M.~Corless, C.~King, R.~Shorten, and F.~Wirth.
\newblock {\em {AIMD} Dynamics and Distributed Resource Allocation}.
\newblock Number~29 in Advances in Design and Control. SIAM, Philadelphia, PA,
  2016.

\bibitem{Crisostomi2020}
Emanuele Crisostomi, Bissan Ghaddar, Florian H{\"a}usler, Joe Naoum-Sawaya,
  Giovanni Russo, and Robert Shorten, editors.
\newblock {\em Analytics for the Sharing Economy: Mathematics, Engineering and
  Business Perspectives}.
\newblock Springer Nature, 2020.

\bibitem{Fioravanti2019}
Andre~R Fioravanti, Jakub Mare{\v{c}}ek, Robert~N Shorten, Matheus Souza, and
  Fabian~R Wirth.
\newblock On the ergodic control of ensembles.
\newblock {\em Automatica}, 108:108483, 2019.

\bibitem{Jacobson1988}
V.~Jacobson.
\newblock Congestion avoidance and control.
\newblock {\em {SIGCOMM} Comput. Commun. Rev.}, 18(4):314--329, Aug. 1988.

\bibitem{Marecek2015}
Jakub Marecek, Robert Shorten, and Jia~Yuan Yu.
\newblock Signaling and obfuscation for congestion control.
\newblock {\em International Journal of Control}, 88(10):2086--2096, 2015.

\bibitem{Marecek2016}
Jakub Marecek, Robert Shorten, and Jia~Yuan Yu.
\newblock r-extreme signalling for congestion control.
\newblock {\em International Journal of Control}, 89(1):1972--1984, 2016.

\bibitem{Wirth2019}
F.~Wirth, S.~St{\"u}dli, J.~Y. Yu, M.~Corless, and R.~Shorten.
\newblock Nonhomogeneous place-dependent {M}arkov chains, unsynchronised
  {AIMD}, and optimisation.
\newblock {\em J. ACM}, 66(4):24:1--24:37, 2019.

\end{thebibliography}
\end{document}